\documentclass[12pt, reqno]{amsart}
\usepackage{amsmath, amsthm, amscd, amsfonts, amssymb, graphicx, color}

\newtheorem{df}{Definition}[section]
\newtheorem{thm}[df]{Theorem}
\newtheorem{pro}[df]{Proposition}
\newtheorem{cor}[df]{Corollary}

\textheight 22.5truecm \textwidth 14.5truecm
\setlength{\oddsidemargin}{0.35in}\setlength{\evensidemargin}{0.35in}

\setlength{\topmargin}{-.5cm}
\begin{document}
\setcounter{page}{1}

\title[Two dimensional Lie algebra of operators]{On The Joint Spectra Of The  Two Dimensional \\  Lie Algebra Of Operators 
In Hilbert Spaces}
\author{Enrico Boasso}
\begin{abstract}We consider the complex solvable non-commutative two dimensional Lie 
algebra $L$, $L=<y>\oplus <x>$, with Lie bracket $[x,y]
=y$, as linear bounded
operators acting on a complex Hilbert space $H$. Under the assumption $R(y)$ closed,
we reduce the computation of the joint spectra $Sp(L,E)$, $\sigma_{\delta ,k}(L,E)$ and $\sigma_{\pi ,k}(L,E)$, 
$k= 0,1,2$, to the computation of the spectrum, the approximate point spectrum,
and the approximate compression spectrum of a single operator. Besides, we also study the case 
$y^2=0$, and we apply our results to the case $H$ finite dimensional.\end{abstract}
\maketitle
\section{Introduction}

In [1] we introduced a joint spectrum for complex solvable finite 
dimensional Lie algebras of operators acting on a Banach space $E$. If $L$ is such
an algebra, and $Sp(L,E)$ denotes its joint spectrum, $Sp(L,E)$ is a compact non empty
subset of $L^*$, which also satisfies the projection property for ideals, i. e., 
if $I$ is an ideal of $L$ and $\Pi\colon L^*\to I^*$ denotes the restriction map, then
$Sp(I,E) = \Pi(Sp(L,E))$. In addition, when $L$ is a commutative algebra, $Sp(L,E)$
reduces to the Taylor joint spectrum, see [5]. Moreover, in [2] we extended
S\l odkowski joint spectra $\sigma_{\delta , k}$ and $\sigma_{\pi , k}$ to the case under consideration and we proved the
usual spectral properties: they are compact non empty subsets of $L^*$ and the projection
property for ideals still holds.\par

In this paper we consider the complex solvable non-commutative  two
dimensional Lie algebra $L$, $L = <y>\oplus <x>$, with Lie bracket $[x,y] = y$,
as bounded linear operators acting on a complex Hilbert space $H$, and we compute
the joint spectra $Sp(L,H)$, $\sigma_{\delta , k}(L,H)$ and $\sigma_{\pi , k}(L,H)$,
for $k= 0, 1, 2$, when $R(y)$ is a closed subspace of $H$. Besides, by means of an homological argument, we reduce the 
computation of these spectra to the one dimensional case. We prove that these
joint spectra are determined by the spectrum, the approximate point spectrum,
 and the approximate compression spectrum of $x$ in $Ker(y)$ and $\overline x$
in $H/R(y)$, where $\overline x$ is the quotient map associated to $x$, ($R(y)$ and $Ker(y)$ are invariant subspaces for the operator $x$).\par

In addition, we consider the case $y^2 = 0$ (it easy to see that $y$ is a nilpotent operator),
and we obtain a relation between the spectrum of $x$ in $R(y)$  and a subset of the spectrum
of $\overline x$ in $H/R(y)$, which give us a more precise characterization
of the joint spectrum $Sp(L,E)$. Finally, we apply our computation to the case
 H finite dimensional.\par

The paper is organized as follows. In Section 2 we review several definitions
and results of [1] and [2]. In Section 3 we prove our main theorems and,
in Section 4, we consider the case $y^2 = 0$ and the finite dimensional case.\par

\section{ Preliminaries}

In this section we briefly recall the definitions of the joint spectra
$Sp(L,H)$, $\sigma_{\delta , k}(L,H)$ and $\sigma_{\pi}(L,H)$, $k= 0, 1, 2$. We
restrict ourselves to the case under consideration. For a complete account of
the definitions and mean properties of these joint spectra, see [1] and [2].\par
From now on, let $L$ be the complex solvable two dimensional Lie algebra,
$L = <y>\oplus <x>$, with Lie bracket $[x,y] = y$, which acts as right continuous
linear operators on a Hilbert space $H$, i. e., $L$ is a Lie subalgebra of 
$\mathcal{L} (H)^{op}$, where $\mathcal{L} (H)$ is the algebra of all bounded linear
operators defined on $H$, and where $\mathcal{L} (H)^{op}$ means that we consider
$\mathcal{L} (H)$ with its opposite product. We observe that any complex solvable non-commutative two
dimensional Lie algebra may be presented in the above form.\par
If $f$ is a character of $L$, we consider the chain complex 
$(H\otimes\wedge L, d(f))$, where $\wedge L$ denotes the exterior algebra
of $L$, and $d(f)$ is the following map:
$$
d_{p-1}(f)\colon H\otimes\wedge^p L\to H\otimes\wedge^{p-1} L,
$$
$$
d_0(f)(a<y>) = y(a),\hskip1.5cm d_0(f)(b<x>) = (x-f(x))(b),
$$
$$
d_1(f)(c<yx>) = (-(x-1-f(x)))(c)<y> + y(c) <x>.
$$
Let $H_*(H\otimes\wedge L,d(f))$ denote the homology of the complex
$(H\otimes\wedge L,d(f))$, we now state our first definition.\par
\begin{df}With $H$, $L$ and $f$ as above, the set $\{f\in L^*\colon f(L^2) = 0,
H_*(H\otimes\wedge L,d(f))\ne 0\}$, is the joint spectrum of $L$ acting on
$H$, and it is denoted by $Sp(L,H)$.\end{df}
As a consequence of the results of [1], we have that $Sp(L,H)$ is
a compact non empty subset of $L^*$. Besides, as a standard calculation shows that
the equality $y = [x,y]^{op} = [y,x]$ implies $ny^n = [y^n,x] = [x,y^n]^{op}$, we have that $y$
is a nilpotent operator. Thus, $Sp(<y>) = 0$, and by the projection property, if 
$f$ belongs to $Sp(L,H)$, as $<y> = L^2$ is an ideal of $L$, $f(y) = 0$.\par
Now, let us consider the basis of $L$, $A$, defined by, $A =\{ y,x\}$, and $B$, the basis
of $L^*$ dual of $A$. If we consider $Sp(L,H)$ in terms of the above basis,
and we denote it by $Sp((y,x),H)$, i. e., $Sp((y,x),H) = \{ (f(y),f(x))\colon f\in Sp(L,H)\}$, we have that,
$Sp((y,x),H) = \{(0,f(x))\colon f\in Sp(L,H)\}$.\par
In addition, the complex $(H\otimes\wedge L,d(f))$ may be written in 
the following way,
$$
0\rightarrow H \xrightarrow{d_1} H\oplus H \xrightarrow{d_0} H\rightarrow 0,
$$
$$
d_0 = \begin{pmatrix}
                y& x-\lambda\\ 
                             \end{pmatrix},
\hskip2cm 
d_1 = \begin{pmatrix}
                     -(x-1-\lambda )\\
                      y\\ \end{pmatrix} ,
$$          
where $\lambda = f(x)$. We denote this chain complex by $(C,d(\lambda))$. 
Thus, as $(0,\lambda)\in Sp((y,x),H)$ if and only if $f\in Sp(L,H)$, 
where $\lambda = f(x)$, to compute the latter is equivalent to compute the former,
and to study the exactness of the chain complex $(H\otimes\wedge L,d(f))$
is equivalent to study the exactness of $(C,d(\lambda))$.\par
With regard to the joint spectra $\sigma_{\delta ,k} (L,H)$ and
$\sigma_{\pi , k}(L,H) $, $k=0,1,2$, we review, for the case under consideration, the definition of them given in [2]. If $p =0, 1, 2$, let $\Sigma_p (L,H)$
be the set $\Sigma_p (L,H) = \{ f\in L^*\colon f(L^2)= 0,  H_p((H\otimes\wedge L,d(f)))\ne 0\}$.
We now state our second definition.\par
\begin{df} With $H$, $L$ and $f$ as above, 
$$
\sigma_{\delta ,k} (L,H) =\bigcup_{0\le p\le k} \Sigma(L,H),
$$
$$
\sigma_{\pi ,k}(L,H)= \bigcup_{k\le p\le 2}\Sigma_p (L,H)\bigcup \{f\in L^*\colon f(L^2) =0, R(d_k(f)) \hbox{is not closed}\},  
$$
where $0\le k\le 2$.\end{df}
We observe that $Sp(L,H)= \sigma_{\delta , 2}(L,H)=\sigma_{\pi ,0}(L,H)$. Besides, as we have said, these joint spectra are compact non empty subsets of $L^*$.
In addition, as in the case of the joint spectrum $Sp(L,H)$, we consider the
joint spectra $\sigma_{\delta ,k}(L,H)$ and $\sigma_{\pi ,k}(L,H)$ in terms
of the basis $A$ and $B$. As these joint spectra are subsets of $Sp(L,H)$, we have
that $\sigma_{\delta , k}((y,x),H) = \{(0,f(x))\colon f\in \sigma_{\delta ,k}(L,H)\}$,
and $\sigma_{\pi ,k }((y,x),H) = \{ (0,f(x))\colon f\in \sigma_{\pi ,k}(L,H)\}$,
where $k = 0, 1, 2$.\par

Moreover, as in the case of the joint spectrum $Sp(L,H)$, to compute
$\sigma_{\delta , k} (L,H) $ and $\sigma_{\pi ,k}(L,H)$, $0\le k\le 2$, is equivalent to 
compute these joint spectra in terms of the basis $A$ and $B$. Finally, to
compute the latter joint spectra it is enough to study the complex $(C, d(\lambda))$,
and to consider the corresponding properties involved in the definition of
$\sigma_{\delta ,k}(L,H)$ and $\sigma_{\pi , k}(L,H)$, $0\le k\le 2$, for it.\par
\vskip4pt
\section{The Main Result}
We begin with the characterization of $Sp(L,H)$. Indeed, we
consider $Sp((y,x),H)$, and by means of an homological argument we reduce
its computation to the case of a single operator.\par
Let us consider the chain complex $(\overline C, \overline d)$,
$$
0\rightarrow H\xrightarrow{\overline d = y} H\rightarrow 0.
$$
Then an easy calculation shows that we have a short exact sequence of chain complex of 
the form,
$$
0\rightarrow (\overline C,\overline d)\xrightarrow{i}(C,d(\lambda))\xrightarrow{p}(\overline C,\overline d)\rightarrow 0,
$$
where $(i_j)_{(0\le j\le 2)} $ and $(p_j)_{(0\le j\le 2)}$ are the following maps:
$i_2=0$, $i_1=I_{H}\oplus 0$, $i_0=I_{H}$, and $p_2=I_{H}$, $p_1=0\oplus I_{H}$, 
$p_0= 0$.\par
Thus, by [4, Chapter II, Section 4, Theorem 4.1], and the fact that $p$ is a map of degree $-1$, we have a 
long exact sequence of homology spaces of the form,
$$
\rightarrow H_q(C,d(\lambda))\xrightarrow{p_{q*}} H_{q-1}(\overline C, \overline d)\xrightarrow{\partial_{q-1}} H_{q-1}(\overline C,\overline d)\xrightarrow{i_{q-1 *}}H_{q-1}(C,d(\lambda))\rightarrow.
$$
We observe that $H_1(\overline C, \overline d)= Ker(y)$ and that
$H_0(\overline C,\overline d)= H/R(y)$. Moreover, as $[x,y]^{op} = y$, we have
that $x(R(y))\subseteq R(y)$ and that $x(Ker(y))\subseteq Ker(y)$. Then, by
[4, Chapter II, Section 4, Theorem 4.1], $\partial_q$, $q=0, 1$, are the following maps: $\partial_0 ([a])=
[(x-\lambda)(a)] = (\overline x-\lambda)[a]$, and $\partial_1(b)= -(x-\lambda -1)(b)$,
where $\overline x\colon H/R(y)\to H/R(y)$ is the map obtained by passing $x$ to the 
quotient space $H/R(y)$. We now give our characterization of $Sp(L,H)$.\par
\begin{pro} Let $L$ be the complex solvable non-commutative two dimensional Lie 
algebra $L=<y>\oplus <x>$, with Lie bracket [x,y]=y, which acts as right continuous
linear operators on a complex Hilbert space $H$. If $R(y)$ is a closed
subspace of $H$ and  we consider $Sp(L,H)$ in terms of the basis $\{y,x\}$ of
$L$ and the basis of $L^*$ dual of the latter, then we have,
$$
Sp((y,x), H)=\{0\}\times Sp(x-1,Ker(y))\cup \{0\}\times Sp(\overline x,H/R(y)).
$$
\indent In addition, we have:\par
\noindent {\rm (i)} $H_0(C,d(\lambda))=0$ iff $\overline x-\lambda\colon H/R(y)\to H/R(y)$ is a surjective map,\par
\noindent  {\rm (ii)} $H_2(C,d(\lambda))=0$ iff $x-1-\lambda\colon ker(y)\to Ker(y)$ is an injective map,\par
\noindent {\rm (iii)} $H_1(C,d(\lambda))=0$ iff $\overline x-1-\lambda$ is injective, and 
$x-\lambda-1$ is surjective.\end{pro}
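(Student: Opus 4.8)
The plan is to read off each homology space of $(C,d(\lambda))$ from the long exact sequence of homology assembled just before the statement, and then to recombine the three vanishing criteria into the spectral description of $Sp((y,x),H)$. Throughout I use that $\ker(y)$ is closed (being the kernel of a bounded operator) and that $R(y)$ is closed by hypothesis, so that both $\ker(y)$ and $H/R(y)$ are Banach spaces on which $x$ induces the bounded operators $x-1-\lambda$ and $\overline{x}-\lambda$ (the invariances $x(\ker(y))\subseteq\ker(y)$ and $x(R(y))\subseteq R(y)$ noted above guarantee these are well defined). Recall also that $\partial_1=-(x-1-\lambda)|_{\ker(y)}$ and $\partial_0=\overline{x}-\lambda$.

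For (ii), the left end of the sequence reads $0\to H_2(C,d(\lambda))\xrightarrow{p_{2*}}\ker(y)\xrightarrow{\partial_1}\ker(y)$, so $p_{2*}$ identifies $H_2(C,d(\lambda))$ with $\ker(\partial_1)=\ker\big((x-1-\lambda)|_{\ker(y)}\big)$; hence $H_2=0$ exactly when $x-1-\lambda$ is injective on $\ker(y)$. (Equivalently one sees this directly, since $H_2(C,d(\lambda))=\ker(d_1)$ and $d_1 c=0$ forces $c\in\ker(y)$ together with $(x-1-\lambda)c=0$.) Dually, the right end $H/R(y)\xrightarrow{\partial_0}H/R(y)\xrightarrow{i_{0*}}H_0(C,d(\lambda))\to 0$ shows $i_{0*}$ is onto with kernel $R(\partial_0)$, so $H_0(C,d(\lambda))\cong\mathrm{coker}(\overline{x}-\lambda)$, and (i) follows: $H_0=0$ iff $\overline{x}-\lambda$ is surjective on $H/R(y)$.

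The main step is (iii), the middle homology, where the two connecting maps interact. From the exact stretch $\ker(y)\xrightarrow{\partial_1}\ker(y)\xrightarrow{i_{1*}}H_1(C,d(\lambda))\xrightarrow{p_{1*}}H/R(y)\xrightarrow{\partial_0}H/R(y)$ I would splice out the short exact sequence $0\to\mathrm{coker}(\partial_1)\to H_1(C,d(\lambda))\to\ker(\partial_0)\to 0$, using $\ker(i_{1*})=R(\partial_1)$, $R(i_{1*})=\ker(p_{1*})$ and $R(p_{1*})=\ker(\partial_0)$. Thus $H_1$ vanishes iff both outer terms vanish, i.e. iff $x-1-\lambda$ is surjective on $\ker(y)$ and $\overline{x}-\lambda$ is injective on $H/R(y)$. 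This is the content of (iii); I expect this splicing, and keeping straight which connecting map controls which end of $H_1$, to be the only real subtlety.

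Finally I would assemble the spectral formula. By definition $(0,\lambda)\in Sp((y,x),H)$ iff $(C,d(\lambda))$ is not exact, i.e. iff at least one of $H_0,H_1,H_2$ is nonzero. Negating and combining (i)--(iii): exactness holds iff $x-1-\lambda$ is both injective and surjective on $\ker(y)$, and $\overline{x}-\lambda$ is both injective and surjective on $H/R(y)$. Since $\ker(y)$ and $H/R(y)$ are Banach (here the hypothesis $R(y)$ closed is essential), the open mapping theorem turns bijectivity into invertibility, so exactness is equivalent to $\lambda\notin Sp(x-1,\ker(y))$ and $\lambda\notin Sp(\overline{x},H/R(y))$. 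Passing to complements gives $Sp((y,x),H)=\{0\}\times\big(Sp(x-1,\ker(y))\cup Sp(\overline{x},H/R(y))\big)$, as claimed.
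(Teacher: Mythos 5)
Your proof is correct and follows essentially the same route as the paper: the paper's own proof consists precisely of reading the vanishing of $H_0$, $H_1$, $H_2$ off the long exact sequence, using the connecting maps $\partial_1=-(x-1-\lambda)\mid_{Ker(y)}$ and $\partial_0=\overline x-\lambda$, which is exactly what you spell out (including the splice $0\to \mathrm{coker}(\partial_1)\to H_1\to \ker(\partial_0)\to 0$). Note only that your (correct) conclusion in (iii), that $\overline x-\lambda$ must be injective on $H/R(y)$, shows the printed condition ``$\overline x-1-\lambda$ is injective'' in the statement to be a typo, as is confirmed by the paper's later use of the condition ``$\overline x-\lambda$ injective'' in the computation of $\sigma_{\pi,1}$.
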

\begin{proof}
It is a consequence of the long exact sequence of homology spaces and 
the form of the maps $\partial_j$, $j=0, 1$.
\end{proof}
In order to characterize the joint spectra $\sigma_{\pi , k}(L,H)$, we recall
the notion of approximate point spectrum of an operator $T$: $\lambda$ is in
the approximate point spectrum of $T$, which we denote by $\Pi (T)$, if there exists a sequence
of unit vectors, $(x_n)_{n\in \mathbb N}$, $x_n\in H$, $\parallel x_n\parallel=1$, such that $(T-\lambda)(x_n)\to 0$ ($n\to \infty$).
An easy calculation shows that $\lambda\notin\Pi (T)$ if and only if 
$Ker (T-\lambda) = 0$ and $R(T-\lambda)$ is closed in $H$. \par
We now consider the spectrum $\sigma_{\pi , 2}((y,x),H)$. We observe that,
as $[x,y]^{op} = y$, $(x-1)(Ker(y)\subseteq Ker(y)$. Then, we may consider 
$\Pi(x-1, Ker(y))$. Indeed, we shall see
that $\sigma_{\pi ,2}((y,x),H) = \{0\}\times \Pi (x-1, Ker(y))$.\par
To prove the last assertion we proceed as follows. By Definition $2$,
we have that $\sigma_{\pi ,2 }^c = \{(0,\lambda)\colon  H_2(C, d(\lambda))=0 \hbox{ and } R(d_1(\lambda)) \hbox{ is closed}\}$.
However, by the definition of $d_1(\lambda)$ and $H_2(C,d(\lambda))$, $H_2(C,d(\lambda))=
Ker(x-1-\lambda)\cap Ker(y)$. Then, $H_2(C,d(\lambda))=0$ is equivalent to
$Ker(x-1-\lambda\mid_{Ker(y))} =0$. Thus, in order to conclude with our assertion, it is
enough to see that the fact $R(x-1-\lambda\mid_{Ker(y)})$ is closed, is equivalent to
$R(d_1(\lambda))$ is closed. \par
Indeed, if $(a_n)_{n\in \mathbb N}$ is a sequence in $Ker(y)$ such that
$(x-1-\lambda)(a_n)\to b\in Ker(y)$ ($n\to \infty$), we have that, $d_1(\lambda)(a_n)\to (-b,0)$ ($n\to \infty$).
If $R(d_1(\lambda))$ is closed, there is a $z$ in $H$ such that $d_1(\lambda)(z) = (-b,0)$,
i.e., $-(x-1-\lambda)(z)= -b$ and $y(z)=0$. Thus, $z\in Ker(y)$ and $R((x-1-\lambda)\mid_{Ker(y)})$ is closed.\par
On the other hand, if $R((x-1-\lambda\mid_{Ker(y)})$ is closed, let us consider
a sequence $(z_n)_{n\in \mathbb N}$, $z_n\in H$, such that $d_1(\lambda)(z_n)\to (w_1,w_2)\in H\oplus H$ ($n\to\infty$).
We decompose $H$ as the orthogonal direct sum of $Ker(y)$ and $Ker(y)^{\perp}$,
$ H = Ker (Y)\oplus Ker(y)^{\perp}$. Let $(a_n)_{n\in \mathbb N}$ and $(b_n)_{n\in \mathbb N}$
be sequences in $Ker(y)$ and $Ker(y)^{\perp}$, respectively, such that
$z_n= a_n+ b_n$. Then,
\begin{align*}
            d_1(\lambda) &=d_1(\lambda)(a_n) +d_1(\lambda)(b_n)\\
                         &=(-(x-1-\lambda)(a_n),0) +(-(x-1-\lambda)(b_n),\overline y (b_n)),\\ 
                                                                                         \end{align*}
where $\overline y\colon Ker(y)^{\perp}\to R(y)$ is the restriction of $y$ to $Ker(y)^{\perp}$. 
We observe that, as $R(y)$ is a closed subspace of $H$, $\overline y$
is a topological homeomorphism. Besides, as $\overline y (b_n)\to w_2$ ($n\to \infty$),
there exists  a $z_2\in Ker(y)^{\perp}$ such that $b_n \to z_2$ ($n\to \infty$), and
$\overline y (z_2) = w_2$. Then, $-(x-1-\lambda) (b_n)\to -(x-1-\lambda)(z_2)$ ($n\to \infty$),
and $-(x-1-\lambda)(a_n)\to w_1 + (x-1-\lambda)(z_2)$ ($n\to \infty$). As $(a_n)_{n\in \mathbb N}$
is a sequence in $Ker(y)$, and $R(x-1-\lambda\mid_{Ker(y)})$ is closed, there is
a $z_1\in Ker(y)$ such that $w_1 + (x-1-\lambda)(z_2) = -(x-1-\lambda)(z_1)$. Thus,
$(w_1,w_2) = d_1(\lambda)(z_1+z_2)$, equivalently, $R(d_1(\lambda))$ is a closed
subspace of $H\oplus H$.\par
With regard to $\sigma_{\pi , 1}((y,x),H)$, we have, by Definition 2.2, that,
$$
\sigma_{\pi ,1}((y,x),H)^c=\{ (0,\lambda)\colon H_i(C,d(\lambda))=0, i=1,2, \hbox{and } R(d_0(\lambda)) \hbox{ is closed}\} ,   
$$
which, by Proposition 1, is equivalent to the following conditions:\par
\noindent (i) $x-1-\lambda\colon Ker(y)\to Ker(y)$ is an isomorphic map,\par
\noindent (ii) $\overline x-\lambda\colon H/R(y)\to H/R(y)$ is an injective map,\par
\noindent (iii) $R(d_0(\lambda))$ is closed.\par
We shall see that $\sigma_{\pi ,1} ((y,x),H)= Sp(x-1,Ker(y))\cup\Pi (\overline x, H/R(y))$.\par
Indeed, it is clear that condition (i) is equivalent to $\lambda\notin Sp(x-1,Ker(y))$.
Then, it is enough to see that condition (ii)-(iii) are equivalent to $\lambda\notin\Pi (\overline x,H/R(y)$
$)$. However,
by (ii), it suffices to verify that the fact 
$R(d_0)(\lambda)$ is closed is equivalent to $R(\overline x-\lambda)$ is closed.
Now, as the quotient map, $\Pi\colon H\to H/R(y)$, is an identification, by [3, Chapter II, Section 6, Lemma 6.1],
$R =R(\overline x-\lambda)= \Pi (R(x-\lambda))$ is closed in $H/R(y)$ if and only if $\Pi^{-1}(R) =
R(x-\lambda) + R(y) =R(d_0(\lambda))$ is closed in $H$.\par
In order to study the joint spectra $\sigma_{\delta ,k}(L,H)$, $k= 0,1,2$,
we recall the definition of the approximate compression Spectrum of 
an operator $T$ in $H$: $\lambda$ is in the approximate compression spectrum of $T$, which we denote by $\Pi C(T)$,  
if there exists a sequence of unit vectors in $H$, $(x_n)_{n\in \mathbb N}$, $x_n\in H$, $\parallel x_n\parallel= 1$, such
that $(T-\lambda)^*(x_n)\to 0$ ($n\to \infty$), i. e., $\Pi C (T)=\Pi (T^*)$. Besides,
an easy calculation shows that $\lambda$ does not belong to $\Pi (T)$ if and
only if $(T-\lambda)$ is a surjective map.\par
We now consider the joint spectra  $\sigma_{\delta ,o}((y,x),H)$. However, by Definiton 2.2, Proposition 3.1 and the previous considerations  about the approximate compression spectrum, it is
clear that $\sigma_{\delta ,k}((y,x),H) = \{0\}\times\Pi C (\overline x ,H/R(y))$.\par
With regards to $\sigma_{\delta ,1}((y,x),H)$, by Definition 2.2 and Proposition 3.1,
we have that $(0,\lambda)$ does not belong to $\sigma_{\delta , 1}((y,x),H)$ if
and only if $(0,\lambda)$ satisfies the following conditions:\par
\noindent (i) $\overline x-\lambda\colon H/R(y)\to H/R(y)$ is an isomorphic map,\par
\noindent (ii) $x-1-\lambda\colon Ker(y)\to Ker(y)$ is surjective.\par
Then, it is obvious that $\sigma_{\delta , 1} ((y,x), H)= \{0\}\times Sp(\overline x, H/R(y))\cup \{0\}\times \Pi C (x-1\mid_{Ker(y)})$.\par
We now summarize our results.\par
\begin{thm}Let $L$ be the complex solvable
non-commutative two dimensional Lie algebra, $L=<y>\oplus<x>$, with Lie bracket $[x,y]^{op}=y$, which acts as right continuous linear 
operators on a complex Hilbert space $H$. If $R(y)$ is closed, then the joint spectra
$Sp(L,H)$, $\sigma_{\delta ,k}(L,H)$ and $\sigma_{\pi ,k}(L,H)$, $k= 0, 1, 2$,
in terms of the basis $\{y, x\}$ of $L$ and the basis of $L^*$ dual of the latter,
may be characterize as follows:\par
\noindent {\rm (i)} $Sp((y,x),H)=\{0\}\times Sp(x-1, Ker(y))\cup \{0\}\times Sp(\overline x, H/R(y))$,\par
\noindent {\rm (ii)} $\sigma_{\delta , 0}((y,x),H) = \{0\}\times \Pi C (\overline x, H/R(y))$,\par
\noindent {\rm (iii)} $\sigma_{\delta , 1}((y,x))= \{0\}\times Sp(\overline x, H/R(y))\cup\{0\}\times \Pi C(x-1,Ker(y))$,\par
\noindent {\rm (iv)}  $\sigma_{\pi ,2}((y,x),H)= \{0\}\times\Pi (x-1, Ker(y))$,\par
\noindent {\rm (v)}   $\sigma_{\pi ,1}((y,x),H) =\{0\}\times Sp(x-1, Ker(y))\cup \{0\}\times\Pi (\overline x, H/R(y))$,\par
\noindent {\rm (vi)} $\sigma_{\delta ,2} ((y,x), H)=\sigma_{\pi ,0}((y,x),H)=Sp((y,x),H)$.
\end{thm}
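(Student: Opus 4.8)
The statement simply assembles the operator-theoretic descriptions obtained in the discussion preceding it, so my plan is to verify the six items one at a time by feeding the vanishing and range-closedness conditions of Definition 2.2 into Proposition 3.1 and the long exact sequence attached to $0\to(\overline C,\overline d)\to(C,d(\lambda))\to(\overline C,\overline d)\to 0$. Item (i) is nothing but Proposition 3.1, and item (vi) is immediate from the identities $Sp(L,H)=\sigma_{\delta,2}(L,H)=\sigma_{\pi,0}(L,H)$ noted after Definition 2.2 together with (i); so the real work is in (ii)--(v).

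For the approximate point spectrum items I would argue on the complements. For (iv), $\sigma_{\pi,2}^c$ is the set of $\lambda$ with $H_2(C,d(\lambda))=0$ and $R(d_1(\lambda))$ closed; since $H_2(C,d(\lambda))=Ker(x-1-\lambda)\cap Ker(y)$, the first condition is injectivity of $(x-1-\lambda)\mid_{Ker(y)}$, and once I know that $R(d_1(\lambda))$ is closed exactly when $R((x-1-\lambda)\mid_{Ker(y)})$ is closed, the two conditions together read $\lambda\notin\Pi(x-1,Ker(y))$. For (v), $\sigma_{\pi,1}^c$ asks for $H_1=H_2=0$ and $R(d_0(\lambda))$ closed; by Proposition 3.1 the vanishing of $H_1$ and $H_2$ delivers $\lambda\notin Sp(x-1,Ker(y))$ together with injectivity of $\overline x-\lambda$, and combining this injectivity with the closedness of $R(d_0(\lambda))$ yields $\lambda\notin\Pi(\overline x,H/R(y))$.

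For the $\sigma_\delta$ items I would use $\Pi C(T)=\Pi(T^*)$ and the elementary fact that $\lambda\notin\Pi C(T)$ precisely when $T-\lambda$ is surjective. Item (ii) then follows because, by Proposition 3.1, $\sigma_{\delta,0}^c$ is exactly surjectivity of $\overline x-\lambda$ on $H/R(y)$; item (iii) follows because $\sigma_{\delta,1}^c$ combines invertibility of $\overline x-\lambda$, i.e. $\lambda\notin Sp(\overline x,H/R(y))$, with surjectivity of $(x-1-\lambda)\mid_{Ker(y)}$, i.e. $\lambda\notin\Pi C(x-1,Ker(y))$.

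The only genuinely analytic steps, and the ones I expect to be the main obstacle, are the two range-closedness equivalences. The harder of these is that $R(d_1(\lambda))$ is closed iff $R((x-1-\lambda)\mid_{Ker(y)})$ is closed: for the nontrivial direction I would take a sequence $z_n=a_n+b_n\in Ker(y)\oplus Ker(y)^{\perp}$ with $d_1(\lambda)(z_n)$ convergent, use that the restriction $\overline y\colon Ker(y)^{\perp}\to R(y)$ is a topological isomorphism (this is exactly where closedness of $R(y)$ enters) to extract a limit of the $b_n$, and then invoke closedness of $R((x-1-\lambda)\mid_{Ker(y)})$ to produce a preimage from the $a_n$. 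The companion equivalence for (v), that $R(d_0(\lambda))$ is closed iff $R(\overline x-\lambda)$ is closed, I would settle quickly from the identification-map lemma, since $\Pi^{-1}(R(\overline x-\lambda))=R(x-\lambda)+R(y)=R(d_0(\lambda))$.
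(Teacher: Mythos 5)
Your proposal is correct and follows essentially the same route as the paper: item (i) via Proposition 3.1 and the long exact sequence, the $\sigma_\pi$ items via complements with the two range-closedness equivalences (the $d_1$ one by the $Ker(y)\oplus Ker(y)^{\perp}$ decomposition and the topological isomorphism $\overline y$, the $d_0$ one by the identification-map lemma), and the $\sigma_\delta$ items via the surjectivity characterization of $\Pi C$. You correctly isolated the same two analytic steps the paper treats in detail, and handled them the same way.
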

\section{A Special Case}
As we have seen, $y$ is a nilpotent operator. In this section we study the case
$y^2=0$ and we obtain a more precise characterization of th joint spectrum $Sp(L,H)$.\par
We decompose $H$ in the following way: $H=Ker(y)\oplus Ker(y)^{\perp} $. Besides,
as $R(y)$ is contained in $Ker(y)$, let us consider $M$, the 
closed subspace of $H$ defined by $M= Ker(y)\cap R(y)^{\perp}$.
Then we have another orthogonal direct sum decomposition of $H$, $H= R(y)\oplus M\oplus Ker(y)^{\perp} $. Moreover, if we
recall that $x(R(y))\subseteq  R(y)$ and $x(Ker(y))\subseteq Ker(y)$, then we have that $x$ and $y$ have the following form,
$$  
y=\begin{pmatrix}  
                0&0&\overline y\\
                0&0&0\\
                0&0&0\\
                                \end{pmatrix},
\hskip2cm
x=\begin{pmatrix}
               x_{11}&x_{12}&x_{13}\\
                0&x_{22}&x_{23}\\
                0&0&x_{33}\\
                                      \end{pmatrix},
$$            
where $\overline y$ is as in Section 3 and the maps $x_{ij}$ ,$1 \le i\le j\le 2$, are
the restriction of $x$ to the corresponding spaces. We now see that, in the case under consideration, $Sp(L,H)$ reduces essentially to the spectrum of $x$ in $Ker(y)$.\par
 \begin{pro} Let $L$ be the complex solvable
non commutative two dimensional Lie algebra $L=<y>\oplus<x>$, with Lie bracket $[x,y]^{op} =y$, which acts as right continuous linear
operators on a complex Hilbert space $H$. If $R(y)$ is closed and $y^2=0$, $Sp(L,H)$, in 
terms of the basis $\{y,x\}$ of $L$ and the basis of $L^*$ dual of the latter,
may be described as follows. If $x_{11}$ and $x_{22}$ are the maps defined above, and if $S_i$, $i=1,2$, are the sets: $S_1=(Sp(x_{11}, R(y))-1)$, and $S_2= (Sp(x_{22},R(y)^{\perp}\cap Ker(y)) $, then, we have that,
$$
Sp((y,x),H)= \{0\}\times (S_1\cup (S_1 +2) \cup S_2\cup (S_2-1)).
$$
\end{pro}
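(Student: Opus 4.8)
The plan is to start from the characterization in Theorem 3.2(i), which already reduces $Sp((y,x),H)$ to the two single--operator spectra $Sp(x-1,Ker(y))$ and $Sp(\overline x,H/R(y))$; the whole task is then to re-express these in terms of the diagonal blocks $x_{11},x_{22},x_{33}$ of the given matrix decomposition. First I would record the two relevant subspace decompositions. Since $y^2=0$ forces $R(y)\subseteq Ker(y)$, one has $Ker(y)=R(y)\oplus M$ with $M=Ker(y)\cap R(y)^{\perp}$, and $H/R(y)\cong M\oplus Ker(y)^{\perp}$. Because $x(R(y))\subseteq R(y)$ and $x(Ker(y))\subseteq Ker(y)$, restricting $x$ to $Ker(y)$ and passing to the quotient $H/R(y)$ yield block upper triangular operators
$$
x|_{Ker(y)}=\begin{pmatrix} x_{11} & x_{12}\\ 0 & x_{22}\end{pmatrix},\qquad
\overline x=\begin{pmatrix} x_{22} & x_{23}\\ 0 & x_{33}\end{pmatrix},
$$
on $R(y)\oplus M$ and $M\oplus Ker(y)^{\perp}$ respectively, the diagonal blocks being exactly $x_{11},x_{22}$ and $x_{22},x_{33}$.

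The key algebraic input is the Lie relation. I would rewrite $[x,y]^{op}=y$ as $yx-xy=y$ and evaluate it on the matrix forms of $x$ and $y$: a direct computation shows that the only nonzero entry of $yx-xy$ is the $(1,3)$ block $\overline y\,x_{33}-x_{11}\overline y$, so the relation becomes $\overline y\,x_{33}=(x_{11}+1)\overline y$. Since $R(y)$ is closed, $\overline y\colon Ker(y)^{\perp}\to R(y)$ is a topological isomorphism, whence $x_{33}=\overline y^{-1}(x_{11}+1)\overline y$. Thus $x_{33}$ is similar to $x_{11}+1$, and therefore $Sp(x_{33},Ker(y)^{\perp})=Sp(x_{11},R(y))+1=S_1+2$. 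This is the step that ties the bottom block to the top block and produces the shifts by $\pm1$ and $\pm2$ in the final formula.

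With this in hand I would compute the two spectra using the spectral description of block upper triangular operators, namely $Sp\!\begin{pmatrix} A & B\\ 0 & C\end{pmatrix}=Sp(A)\cup Sp(C)$. Applying this gives $Sp(x|_{Ker(y)})=Sp(x_{11})\cup Sp(x_{22})$ and $Sp(\overline x)=Sp(x_{22})\cup Sp(x_{33})$; subtracting $1$ from the first, and using $Sp(x_{33})=S_1+2$ in the second, their union becomes $S_1\cup(S_2-1)\cup S_2\cup(S_1+2)$, which is precisely the claimed set. Assembling these with Theorem 3.2(i) then delivers the stated description of $Sp((y,x),H)$.

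I expect the block--triangular spectral identity to be the main obstacle. The inclusion $Sp(T)\subseteq Sp(A)\cup Sp(C)$ is elementary (invertibility of both diagonal blocks produces an explicit inverse of $T$) and already yields the easy half of the Proposition. The reverse inclusion is the delicate point: in general one only has $Sp_p(A)\cup Sp_{su}(C)\subseteq Sp(T)$, where $Sp_p$ and $Sp_{su}$ denote the point and surjectivity spectra, and over an infinite--dimensional Hilbert space the spectrum of a triangular operator may be a \emph{proper} subset of $Sp(A)\cup Sp(C)$. Equality $Sp(T)=Sp(A)\cup Sp(C)$ holds once $Sp_p(R)=Sp_{su}(R)=Sp(R)$ for each diagonal block $R$ --- in particular in finite dimensions, which is exactly the setting of the subsequent application --- and I would invoke this to close the reverse inclusion, observing that the shared block $x_{22}$ (top--left of $\overline x$, bottom--right of $x|_{Ker(y)}$) lets $\overline x$ and $x|_{Ker(y)}$ jointly detect its point and surjectivity spectra.
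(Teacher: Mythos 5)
Your proposal follows essentially the same route as the paper's own proof: the matrix forms of $x$ and $y$ relative to $H=R(y)\oplus M\oplus Ker(y)^{\perp}$, the computation that $[x,y]^{op}=y$ reduces to $\overline y\,x_{33}-x_{11}\overline y=\overline y$, the resulting similarity $x_{33}=I+\overline y^{-1}x_{11}\overline y$ giving $Sp(x_{33},Ker(y)^{\perp})=S_1+2$, and finally the block upper triangular identities $Sp(x-1,Ker(y))=S_1\cup(S_2-1)$ and $Sp(\overline x,H/R(y))=(S_1+2)\cup S_2$ fed into Theorem 3.2(i). All of these steps appear, in the same order, in the paper.

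The one point where you and the paper diverge is the treatment of the identity $Sp\bigl(\begin{smallmatrix} A & B\\ 0 & C\end{smallmatrix}\bigr)=Sp(A)\cup Sp(C)$: the paper simply asserts it ("it is clear that\dots"), while you correctly flag that only the inclusion $Sp(T)\subseteq Sp(A)\cup Sp(C)$ and the containment $Sp_p(A)\cup Sp_{su}(C)\subseteq Sp(T)$ are automatic, and that the reverse inclusion can fail in infinite dimensions (the standard example being the unilateral shift and its adjoint glued by a rank-one corner, whose triangular sum is unitary). This is a genuine soft spot, and it is a soft spot of the published proof as well, not an artifact of your reconstruction. However, the repair you propose --- assuming $Sp_p(R)=Sp_{su}(R)=Sp(R)$ for the diagonal blocks --- only holds in finite dimensions (or under comparable extra hypotheses), whereas the Proposition is stated for an arbitrary Hilbert space with $R(y)$ closed and $y^2=0$; nothing in those hypotheses controls $Sp(x_{11})\cap Sp(x_{22})$ or $Sp(x_{22})\cap Sp(x_{33})$, which is where the defect set of a triangular operator matrix lives. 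So as written your argument (like the paper's) proves the inclusion of $Sp((y,x),H)$ in $\{0\}\times(S_1\cup(S_1+2)\cup S_2\cup(S_2-1))$ in full generality, but closes the reverse inclusion only in the finite-dimensional setting of Corollary 4.2. If you want to present this as a complete proof for general $H$, you should either add the missing hypothesis, or find an argument special to this situation; otherwise state explicitly that the reverse inclusion is obtained only up to a subset of $Sp(x_{11})\cap Sp(x_{22})$ and $(S_1+2)\cap S_2$.
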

\begin{proof}  
An easy calculation shows that the relation $[x,y]^{op}=y$ is equivalent to
$\overline y x_{33} -x_{11}\overline y =\overline y$. However, as $\overline y$ is
a topological homeomorphism, $x_{33} = I_{Ker(y)^{\perp}} + \overline y^{-1} x_{11} \overline y$. In particular,
$Sp(x_{33},Ker(y)^{\perp})= Sp(x_{11}, R(y)) +1$. Then, as $Sp(\overline x, H/R(y))=
Sp(x_{22},M) \cup Sp(x_{33}, Ker(y)^{\perp})$, where $M = R(y)^{\perp}\cap Ker(y), $we have that
$Sp(\overline x,H/R(y))= (S_1+2)\cup S_2$.\par
On the other hand, it is clear that $Sp(x-1,Ker(y)) = S_1\cup (S_2-1)$. Thus, by Theorem 1,
we conclude the proof.
\end{proof}
Finally, we consider the case $R(y)$ closed, $y^2=0$, and $H$ finite dimensional.
If $r= \dim (R(y)) $ and $k = \dim (Ker(y))$, let us chose a basis of $Ker(y)$ such that the 
first $r$-vectors of it are a basis of $R(y)$, and in this basis, $x$ has an upper triangular form,
with diagonal entries $\lambda_{ii}$, $1\le i\le k$. Then we have the following corollary.\par
\begin{cor} Let $H$, $L$ and the operator $y$ be as in Proposition 4.1. If $H$ is  finite dimensional and
 we consider a basis of $Ker(y)$ with the above conditions, $Sp(L,H)$, in terms
of the basis of $L$ and $L^*$ considered in Proposition 4.1, is the following set,
$$
Sp((y,x),H) =\{0\}\times \{(\lambda_{ii}-1)_{(1\le i\le k)}\cup (\lambda_{ii})_{(m\le i\le k)}\cup (\lambda_{ii} +1)_{(1\le i\le m)}\}.
$$
\end{cor}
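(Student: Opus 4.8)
The plan is to specialize Proposition 4.1 to the finite dimensional setting, where every subspace is closed (so the hypothesis that $R(y)$ is closed is automatic) and the spectrum of an operator is exactly its set of eigenvalues. Since $y^2=0$ forces $R(y)\subseteq Ker(y)$, and we know from Section 3 that $x(R(y))\subseteq R(y)$, the subspace $R(y)$ is invariant for $x|_{Ker(y)}$; this invariance is precisely what permits the choice of basis in the statement, namely a basis of $Ker(y)$ whose first $r$ vectors span $R(y)$ and in which $x|_{Ker(y)}$ is upper triangular with diagonal entries $\lambda_{11},\dots,\lambda_{kk}$. First I would record that in such a basis the matrix of $x|_{Ker(y)}$ is block upper triangular,
$$
x|_{Ker(y)}=\begin{pmatrix} A & B\\ 0 & C\end{pmatrix},
$$
with $A$ the $r\times r$ upper triangular matrix of $x_{11}=x|_{R(y)}$ and $C$ an upper triangular matrix of size $k-r$.

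The key step is to read off the two spectra appearing in Proposition 4.1 directly from this block form. Since $A$ is upper triangular with diagonal $\lambda_{11},\dots,\lambda_{rr}$, we get $Sp(x_{11},R(y))=\{\lambda_{ii}\colon 1\le i\le r\}$. For $x_{22}$ I would use that the $(2,2)$ block of $x$ with respect to the decomposition $Ker(y)=R(y)\oplus M$, where $M=R(y)^{\perp}\cap Ker(y)$, represents the operator induced by $x$ on the quotient $Ker(y)/R(y)$, the lower-left block vanishing by invariance of $R(y)$; hence $Sp(x_{22},M)$ equals the spectrum of that quotient operator, which is represented by $C$ and therefore equals $\{\lambda_{ii}\colon r<i\le k\}$. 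This identification of the compression $x_{22}$ with the quotient operator is the main point to get right, since it is exactly what lets us pass from the abstract blocks $x_{11},x_{22}$ of Proposition 4.1 to the concrete diagonal entries $\lambda_{ii}$.

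Finally I would substitute into the formula of Proposition 4.1. With $S_1=Sp(x_{11},R(y))-1=\{\lambda_{ii}-1\colon 1\le i\le r\}$ and $S_2=Sp(x_{22},M)=\{\lambda_{ii}\colon r<i\le k\}$, we obtain
$$
S_1\cup(S_2-1)=\{\lambda_{ii}-1\colon 1\le i\le k\},\qquad S_1+2=\{\lambda_{ii}+1\colon 1\le i\le r\},
$$
together with $S_2=\{\lambda_{ii}\colon r<i\le k\}$, whose union is the asserted set once $m$ is identified with $r$. The only delicate bookkeeping is the boundary index: the middle family must run over $r<i\le k$ and the last over $1\le i\le r$, so one has to fix the meaning of $m$ (namely $m=r$) and check that no spurious value $\lambda_{rr}$ is adjoined to the $S_2$ part. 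No genuine obstacle arises beyond this indexing care, since all the analytic content (closedness of ranges, the homeomorphism $\overline y$) is vacuous in finite dimensions and everything reduces to the eigenvalues of triangular matrices.
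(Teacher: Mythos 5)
Your proposal is correct and follows exactly the route the paper intends: the paper states the corollary without proof as an immediate specialization of Proposition 4.1, and your substitution of $Sp(x_{11},R(y))=\{\lambda_{ii}\colon 1\le i\le r\}$ and $Sp(x_{22},M)=\{\lambda_{ii}\colon r<i\le k\}$ (read off the triangular blocks) into $S_1\cup(S_1+2)\cup S_2\cup(S_2-1)$ is precisely that argument. You are also right to flag the index $m$, which the paper never defines and which must be identified with $r$ up to the boundary convention you describe.
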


\bibliographystyle{amsplain}

\vskip.5cm
\noindent Enrico Boasso\par
\noindent E-mail address: enrico\_odisseo@yahoo.it
\end{document}